\theoremstyle{plain}
\theoremstyle{remark}
\newtheorem*{convention*}{Convention}
\theoremstyle{plain}
\newtheorem{theorem}{Theorem}[section]
\newtheorem{Note}[theorem]{Note}
\theoremstyle{definition}
\newtheorem{definition}[theorem]{Definition}
\newtheorem{example}[theorem]{Example}
\title{More on fuzzy Topological Spaces on fuzzy space}
\author{Abd Ulazeez Alkouri\\
  Department of Mathematics\\
 Ajloun National University\\
P.O.Box: 43- Ajloun- 26810- Jordan.\\
  \texttt{1alkouriabdulazeez@gmail.com} \\
   \And
 Mohammad Hazaimeh\\
  Department of Mathematics\\
 Al-Hussein Bin Talal University\\
Ma'an, P.O. Box (20), 71111, Jordan\\
  \texttt{mhazaimeh06@gmail.com}  \\
  \And
  Ibrahim Jawarneh\\
  Department of Mathematics\\
 Al-Hussein Bin Talal University\\
Ma'an, P.O. Box (20), 71111, Jordan\\
  \texttt{ibrahim.a.jawarneh@ahu.edu.jo} \\
}
\begin{document}
\maketitle

\begin{abstract}
The fuzzy topological space was introduced by Dip in 1999 depending on the notion of fuzzy spaces. Dip’s approach helps to rectify the deviation in some definitions of fuzzy subsets in fuzzy topological spaces. In this paper, further definitions, and theorems on fuzzy topological space fill the lack in Dip’s article. Different types of fuzzy topological space on fuzzy space are presented such as co-finite, co-countable, right and left ray, and usual fuzzy topology. Furthermore, boundary, exterior, and isolated points of fuzzy sets are investigated and illustrated based on fuzzy spaces. Finally, separation axioms are studied on fuzzy spaces. 
\end{abstract}

\keywords{fuzzy sets,\and fuzzy topological space \and separation axioms.}

\section{Introduction} 
Fuzzy set was introduced in 1965 by Zadeh \cite{Zadeh_1965}. Then, Days rolled over and the concept of fuzzy mathematics has been generalized and created several applications in a comprehensive range of areas. The fuzzy mathematics carries out the liability of translation from the human being's information/ knowledge to the fuzzy case. The competition lies in how to choose the logical development from several approaches. In 1994, Dip \cite{Dib_1994} selected the development of fuzzy sets in the field of algebra to illustrate his idea and step out to apply it to the field of topology in 1999 \cite{Dib_1999}. His idea centered on replacing fuzzy spaces instead of ordinary universal sets to correct the deviation in the definitions of the closed fuzzy subset and the closure of a fuzzy subset in fuzzy topological spaces.

Chang's approach in 1968 \cite{Chang_1968}, overcame the shortage of the fuzzy universal set by introducing fuzzy topology on a base set $X$, as a family $\tau$ of fuzzy subsets of $X$, which satisfies formal topology's conditions, as a generalization of Zadeh's concept \cite{Zadeh_1965}. Later on, some generalization, redefine, and suggestion on fuzzy topology was made and studied by Wong (1973) \cite{Wong_1973}, Lowen (1976) \cite{Lowen_1976}, and Hazra et al. (1992) \cite{Haz_Sam_1992}. In contrast to Chang's approach, Dib \cite{Dib_1999} introduced fuzzy topology on fuzzy space $(X,I)$ which is a collection of fuzzy subspaces, adequate the standard axioms of 
topology

In this research, we follow Dib's approach by using fuzzy space instead of universal set. Also, we present further definitions, examples, and theorems on fuzzy topological space and its properties related to point-set concepts. Then, separation axioms are introduced and studied on fuzzy spaces. Finally, the conclusion is summarized.
\section{Preliminaries}
A fuzzy set is a class of objects with a continuum of grades of membership function $u_A(x)$ which associates with each point in $X $a real number in the interval $[0,1]$, see \cite{Zadeh_1965}.In topology Chang \cite{Chang_1968}, presented the fuzzy topology (C-fuzzy topology) on a base set $X$ as in the following definition
\begin{definition}
	A fuzzy topology is a family $\sigma$ of fuzzy sets in $X$ which satisfies the following conditions:
	\begin{enumerate}
		\item $\emptyset, X \in \sigma$, 
		\item If $A,B \in \sigma$ , then $A \cap B \in \sigma$, 
		\item If $A_i \in \sigma$ for each $i \in I$, then $U_I A_i\ \in \sigma$.
	\end{enumerate}
$\sigma$ is called a fuzzy topology for $X$, and the pair $(X, \sigma)$ is a fuzzy topological space (FTS).
\end{definition}
\begin{definition}
\cite{Dubois1980}Triangular Fuzzy Number $A$ fuzzy number $\bar{A} = (a, b, c)$ is called a triangular fuzzy number if its membership function is given by.
\[	\mu_{\bar{A}}(x)= 
\begin{cases}
0, \; \;\; x \le a \\
\frac{x-a}{b-a}, \;\;\; a<x<b \\
\frac{c-x}{c-b}, \;\;\; b<x<c \\
0, \;\;\; x \ge a
\end{cases}
\]
A new approach to define the fuzzy topology using the concept of fuzzy space. Let $X$ be an ordinary set and $I$ be a closed interval $[0,1]$. The following definitions and examples are in Dib 1999 \cite{Dib_1999}.
\end{definition}
\begin{definition}
	The fuzzy space $(X,I)$ is the set of all ordered pairs $(x,I)$; $x \in X$, i.e. $\{(x,I): x \in X\}$, where $(x,I) = \{(x, r): r \in I\}$.The order pair $(x,I)$ is called a fuzzy element of the fuzzy space $(X,I)$.
\end{definition}
\begin{definition}
	The fuzzy subspace $U$ of the fuzzy space $(X,I)$ is a collection of ordered pairs $(x, u_x)$, where $x \in U_0$ (for a given subset $U_0$ of $X$) and $u_x$ is a subset of $I$, which contains at least one element besides zero element. If $x \not \in  U_0$, then $u_x = \{0\}$.
	The fuzzy subspace $U$ is denoted by $U = \{(x, u_x): x \in U_0\}$, where $(x, u_x)$ is the fuzzy element of fuzzy subspace $U$. $U_0$ is called the support of $U$ and denoted by $SU = U_0$.
	
	The empty fuzzy subspace $\emptyset$ of $(X,I)$ is defined by $\{(x,\phi_x): x \in \emptyset\}$, i.e. $S \emptyset = \emptyset$.
\end{definition}
\begin{definition}
	The fuzzy subspace $V = \{(x, v_x): x \in V_0\}$ is contained in fuzzy subspace $U = \{(x, u_x): x \in U_0
	\}$ and denoted by $V \subset U$, if $V_0 \subset U_0$ and $v_x \subset u_x$, for all $x \in V_0$	.
\end{definition}
\begin{definition}
	The fuzzy point $P$ of the fuzzy space $(X,I)$ is a fuzzy subspace $P = \{(x, p_x): x \in P_0
	\}$, where $P_0$ is a non-empty subset of $X$ and $p_x$ contains only one element $\rho_x$ besides zero: $p_x = \{0; \rho_x\}; x \in P_0$. The fuzzy point $P$ is contained in the fuzzy subspace $U$ and we write $P \in U$, if $p_x \subset u_x$, for all $x \in P_0$.
\end{definition}
\begin{definition}
	Let $U = \{(x, ux,): x \in U_0\}$ and $V = \{(x, vx): x \in V_0\}$ be fuzzy subspaces of the fuzzy spaces $(X,I)$. The union $U \cup V$ and the intersection $U \cap V$ of fuzzy subspaces are defined by the relation

	$U \cup V = \{(x,u_x \cup v_x):x \in U_0 \cup V_0\},$ 
	
		$U \cap V = \{(x,u_x \cap v_x):x \in U_0 \cap V_0\}.$
		
		The support of these fuzzy subspaces satisfies that
		
		$S (U \cup V) =S(U) \cup S(V) = U_0 \cup V_0$ 
		
				$S (U \cup V)  \subset S(U) \cup S(V) = U_0 \cup V_0$ 
				
the inclusion relation will be an equality, if $u_x \cap v_x \not = \{0\},$ for all $x \in U_0 \cap V_0$ .
\end{definition}
\begin{definition}
	The family $\tau$ of fuzzy subspaces of the fuzzy space $(X,I)$ is called a fuzzy topology on the fuzzy space  $(X,I)$, if $\tau$ satisfies the following conditions:

	\begin{enumerate}
		\item $(X,I) \in \tau$ and $\emptyset	\in \tau$,
		\item $U \cap V \in \tau$ for every $U,V \in \tau$,
		\item $U_{U \in \tau_1} U \in \tau$ for every $\tau_1 \subset \tau$.
	\end{enumerate}
The ordered pair $((X,I),\tau)$ is called a fuzzy topological space. The elements of $\tau$ are open fuzzy subspaces of the fuzzy topology.
\end{definition}
\begin{example}
	The trivial fuzzy topology $\tau$ on the fuzzy space $((X,I),\tau)$ contains only two elements $(X,I)$ and $\phi$.
\end{example}
\begin{example}
The discrete fuzzy topology $\tau$ on the fuzzy space $((X,I),\tau)$ contains all the fuzzy subspaces of $(X,I)$.
\end{example}
\begin{definition}
The neighborhood of the fuzzy point $P$ (or the fuzzy subset $A$) in the fuzzy topology $\tau$ is a fuzzy subspace $U$, which contains an element of $\tau$, containing $P$ (or $A$, respectively).
If $U$ is a neighborhood of the fuzzy point $P$ (or fuzzy subset $A$), then $P$ (or $A$) is called an interior point (or fuzzy subset) of $U$. The interior $U^0$ of the fuzzy subspace $\cup$ is the union of all its interior points. 
\end{definition}
\begin{Note}
	Remember that the operations "$\subset, \cup , \cap -$" on the fuzzy subspaces are defined through the corresponding operations "$\subset, \cup , \cap -$" on the sets of membership values. This helps us to reformulate the concepts and the results of the ordinary case and carry it to the fuzzy topological spaces.
\end{Note}
\begin{theorem}
	If $((X,I),\tau)$ is a fuzzy topological space, then the fuzzy subspace $U$ is open iff it is a neighborhood of all its fuzzy points
\end{theorem}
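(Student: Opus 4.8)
The plan is to transcribe the classical fact that a set is open if and only if it is a neighbourhood of each of its points, being careful that every occurrence of ``$\subset$'', ``$\cup$'' and ``$\cap$'' is now to be read levelwise on the membership-value sets $u_x$, as the Note preceding the statement instructs. So the argument splits into the two implications, with all the work concentrated in the ``if'' direction.

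For the ``only if'' direction I would argue directly. Suppose $U \in \tau$. Let $P$ be any fuzzy point with $P \in U$. Then $U$ itself is an element of $\tau$ which contains $P$ and which is contained in $U$ (trivially $U \subset U$); by the definition of a neighbourhood, this witnesses that $U$ is a neighbourhood of $P$. Since $P$ was an arbitrary fuzzy point of $U$, $U$ is a neighbourhood of all its fuzzy points.

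For the ``if'' direction, assume $U = \{(x,u_x) : x \in U_0\}$ is a neighbourhood of each of its fuzzy points. For every $x \in U_0$ and every non-zero $r \in u_x$ (such $r$ exists by the definition of a fuzzy subspace), form the fuzzy point $P_{x,r}$ with support $\{x\}$ and membership set $\{0,r\}$; then $P_{x,r} \in U$, so by hypothesis there is an open fuzzy subspace $G_{x,r} \in \tau$ with $P_{x,r} \in G_{x,r} \subset U$. I then claim $U = \bigcup_{x \in U_0,\ 0 \ne r \in u_x} G_{x,r}$. The inclusion of the union into $U$ is immediate from $G_{x,r} \subset U$ for every index, since the union of fuzzy subspaces is formed levelwise and a union of subsets of $u_x$ stays inside $u_x$ (and supports add up inside $U_0$ accordingly). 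For the reverse inclusion, note that $P_{x,r} \subset G_{x,r}$ forces $x \in S(G_{x,r})$ and $r \in (g_{x,r})_x$; hence the support of the union contains every $x \in U_0$, and at each such $x$ the membership set of the union contains $0$ together with every non-zero $r \in u_x$, i.e.\ it equals $u_x$. Thus the union equals $U$, and since $\tau$ is closed under arbitrary unions (the third axiom of a fuzzy topology), $U \in \tau$; that is, $U$ is open.

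The one genuinely delicate point is exactly this verification that the union of the chosen $G_{x,r}$ reproduces $U$ \emph{exactly} — both on supports and on each membership-value set — which is why the indexing must be arranged so that every level $r \in u_x$ is actually attained by some $P_{x,r}$; once that bookkeeping is in place, everything reduces to the levelwise translation recorded in the Note. I would also record the trivial edge cases: the empty fuzzy subspace satisfies the statement vacuously, and for $x \notin U_0$ the value $u_x = \{0\}$ contributes nothing, so no index is lost. The remainder is a routine transcription of the ordinary topological argument.
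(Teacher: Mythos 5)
Your proof is correct. Note that the paper itself gives no proof of this theorem: it is recalled in the preliminaries as one of the statements taken from Dib's 1999 article, so there is no in-paper argument to compare against. Your two-implication argument is the natural one in Dib's framework: the ``only if'' direction is the trivial witness $U \in \tau$ with $U \subset U$, and the ``if'' direction writes $U$ as the union of the open subspaces $G_{x,r}$ attached to the singleton-support fuzzy points $P_{x,r} = \{(x,\{0,r\})\}$, which Dib's definition of a fuzzy point does permit, and then invokes closure of $\tau$ under arbitrary unions. You correctly identified and handled the only delicate points, namely that the union is computed levelwise on the membership-value sets (so $P_{x,r} \in G_{x,r} \subset U$ forces the union's value set at $x$ to be exactly $u_x$, using that $0$ lies in every value set) and the vacuous case of the empty fuzzy subspace.
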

\begin{definition}
	The fuzzy subspace $U$ is called a closed fuzzy subspace in fuzzy topological space  $((X,I),\tau)$ if its complement $U^c = (X,I)- U$ is an open fuzzy subspace.

\end{definition}
\begin{definition}
	The closure $(\tau$ - closure) $U$ of the fuzzy subspace $U$ of the fuzzy topological space , $((X,I),\tau)$ is the intersection of all closed fuzzy subspaces containing $U$.
\end{definition}
\begin{definition}
	The fuzzy point $P$ is called a limit point of a fuzzy subspace $U$, if every neighborhood of $P$ contains fuzzy points of $U$ other than $P$. The fuzzy subset $A$ is called a limit fuzzy subset of $U$ if its associated fuzzy point $P_A$ is a limit point of $U$.
\end{definition}
\section{Main Results}
In this section, several definitions, examples, and theorems are derived based on fuzzy spaces (under Dib's 
approach). We define more topologies on fuzzy space. In the ordinary case the difference between two ordinary subsets is an ordinary subset; similarly, in the fuzzy case the difference between fuzzy subspaces will be a fuzzy subspace.
\begin{definition}
	The difference $U-V$ between the fuzzy subspaces $U$ and $V$ is defined by
	$U-V =\{(x,h_x):x \in U_0-V_0\},$ where $h_x= (u_x -v_x) \cup \{0\}$ Notice that $S(U-V) \supset U_0-V_0$ and the equality holds if $u_x \subset v_x$, for all $x \in U_0 \cap V_0$
\end{definition}
\begin{definition}
	The fuzzy subspace $U$ of the fuzzy space $(X,I)$ is finite if $U_0$ is finite.
\end{definition}
\begin{definition}
The fuzzy subspace $U$ of the fuzzy space $(X,I)$ is countable if $U_0$ is countable.
\end{definition}
\begin{definition}
	Let $(X,I)$ be a non-empty fuzzy set and $\tau = \{(X,I), \phi,U \subset (X,I); U^c \;\; \text{fuzzy finite}\}$, then $\tau$ is called the co-finite fuzzy topology on $(X,I)$, where $U^c = (X,I)-U$, is a finite set.
\end{definition}
\begin{example}
	Any finite set $X$, then the fuzzy topological space $\tau$ of the fuzzy space $(X,I)$ is co-finite.
\end{example}
\begin{definition}
	Let $(X,I)$ be a non-empty fuzzy set and $\tau = \{(X,I), \phi,U \subset (X,I); U^c$  $\text{fuzzy countable}\}$, then $\tau$ is called the co-countable fuzzy topology on $(X,I)$ , where $U^c = (X,I)- U$, is a countable set.
\end{definition}
\begin{example}
	Any countable set $X$, then the fuzzy topological space $\tau$ of the fuzzy space $(X,I)$ is co-countable.
\end{example}
\begin{definition}
	Let $(R,I)$ and $\tau_r = \{(R,I), \phi,U_r = ((r, \infty), u_x): u_x \subset I, x \in (r, \infty), r \in R\}$, then $\tau$ is called the right ray fuzzy topology on $(R,I)$.
\end{definition}
\begin{example}
	Consider $U_{r1} , U_{r2} , U_{r3}$ are in a right ray fuzzy topology on $(R,I)$ such that $S(U_{r1}) = (a, \infty), S(U_{r2}) =(b, \infty), S(U_{r3}) = (b, \infty),$ where $a < b < c$ with fuzzy membership values are given in the Graph 1.
\end{example}
\includegraphics[height=5cm,width=5 in]{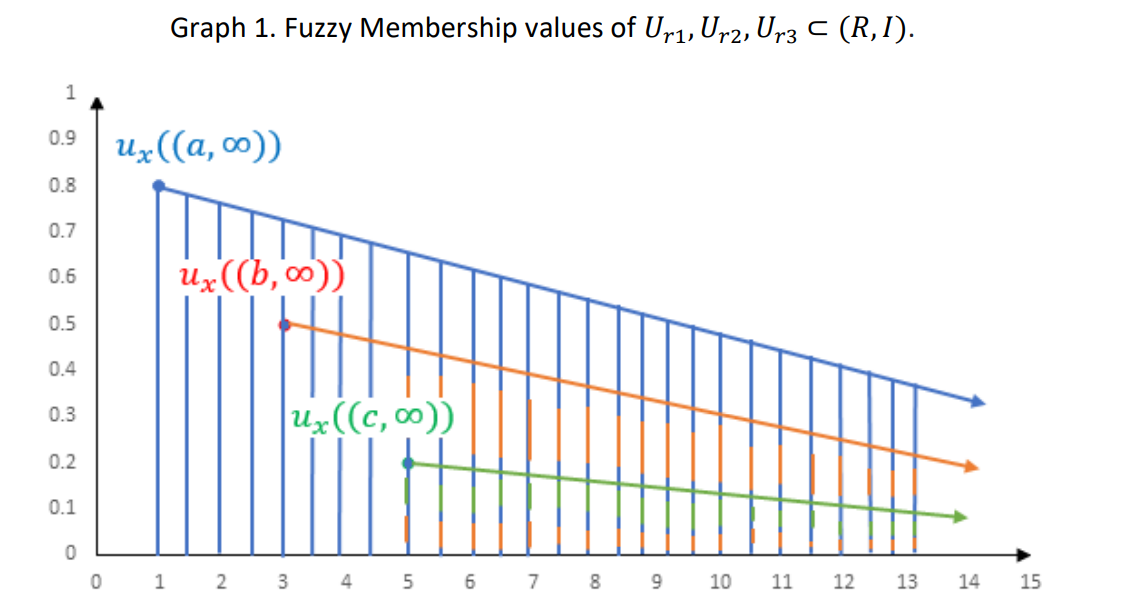}
\begin{definition}
Let $(R,I)$ and $\tau_l = \{(R,I), \phi,U_l = ((-\infty, l), u_x); u_x \subset I, x \in (-\infty, l), l \in R\}$, then $\tau$ is called the left ray fuzzy topology on $(R,I)$.
\end{definition}
\begin{example}
Consider $U_{l1} , U_{l2}, U_{l3}$ are in a left ray fuzzy topology on $(R,I)$ such that $S(U_{l1}) = (-\infty, a), S(U_{l2}) =(-\infty, b), S(U_{l3}) = (-\infty, c )$, where $a > b > c$ with fuzzy membership values are given in the Graph 2.
\end{example}
\includegraphics[height=5cm,width=5 in]{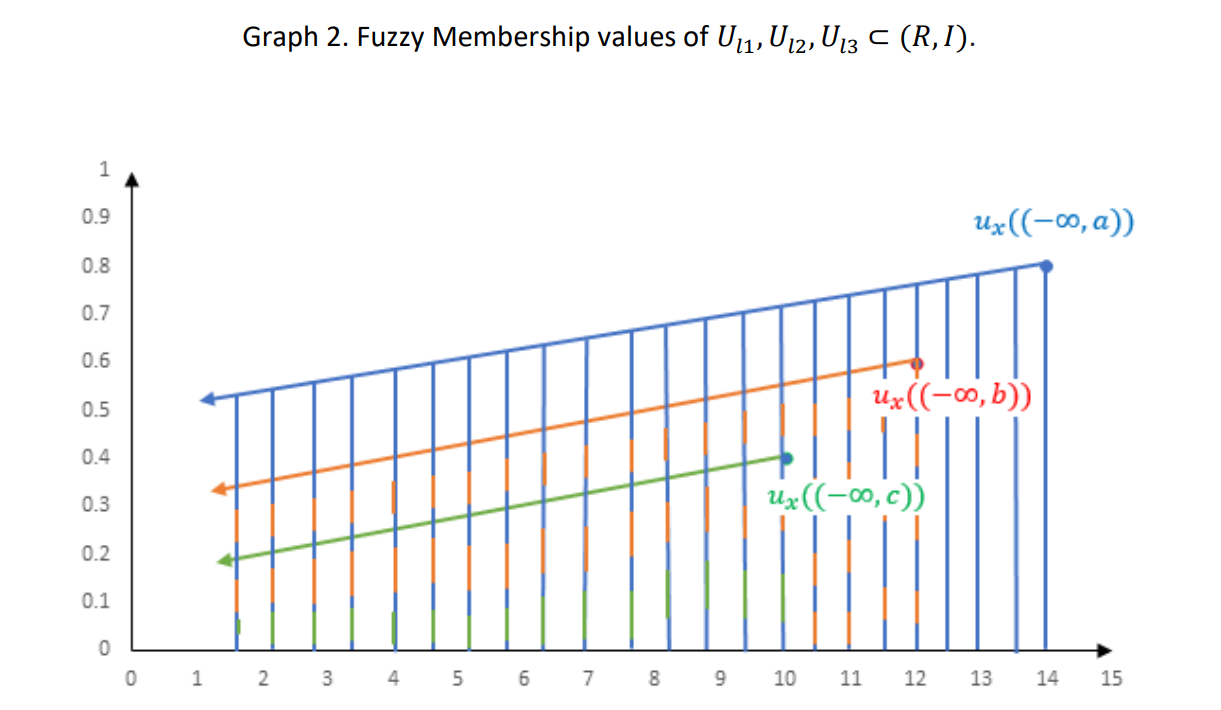}
\begin{definition}
	Let $(R,I)$ and $\tau = \{\emptyset, (R,I),U \subset (R,I)\}$; for each fuzzy point $P \in U$, there exists open interval $(a, b)$ such that $\{((a, b), v_x): (a, b) \subset U_0, v_x \subset u_x$ for all $a<x<b$ and $P_X \subset v_x$ for all $x \in P_0\}$, then $\tau$ is called the standard ( or usual ) fuzzy topology on $(R,I)$. 
\end{definition}
\begin{example}
	Let $S(U_1) = (a, b), S(U_2) = (h, q), S(U_3) = (c, d)$ be fuzzy subspace of $(R,I)$, where
	\[\mu_{U_I}=
	\begin{cases}
u_x, \;\; x \in U_i, \; i=1,2,3. \\
0, \;\;\; o.w
	\end{cases}
	\]
By using triangular fuzzy number, we may find the value of $\mu(x)$. See the Graph 3 below:
	\includegraphics[height=5cm,width=5 in]{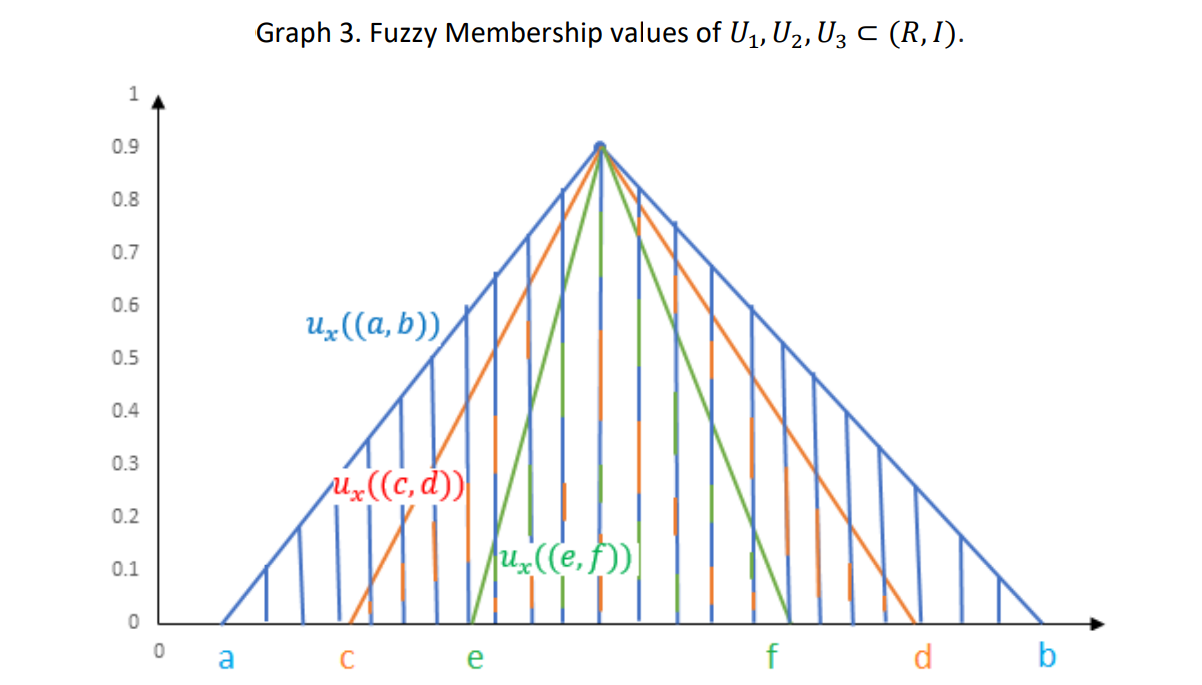}
\end{example}
\begin{definition}
	Let $((X,I), \tau)$ be a fuzzy topological space. A fuzzy subspace $D \subseteq (X,I)$ is said to be fuzzy dense if $Cl(D) = (X,I)$.
\end{definition}
\begin{definition}
	 Let $U$ be a fuzzy subspace of $(X,I)$, then a fuzzy point $P \in U$ is an isolated of $U$ iff there exists neighborhood $G$ containing $P$ such that $G \cap U = P$.
\end{definition}
\begin{Note}
	If the fuzzy points of $U$ that are not cluster fuzzy points of $U$ then are called isolated fuzzy points of $U$.
\end{Note}
\begin{definition}
	Let $U$ be a fuzzy subspace of $(X,I)$, a fuzzy point $P \in U$ is an exterior fuzzy point of $U$, iff there exists neighborhood $G$ containing $P$ such that $P \in G \subseteq (X,I)- U,$ and denoted by $Ext(U)$.
\end{definition}
\begin{definition}
 Let $U$ be a fuzzy subspace of $(X,I)$, a fuzzy point $P \in U$ is boundary fuzzy point of $U$, iff every neighborhood $G$ containing $P$ contains at least one fuzzy point of $U$ and at least one fuzzy point of $(X,I)-U$, and denoted by $Bd(U)$.
\end{definition}
\begin{theorem}
	 If $U$ is fuzzy subspace of the fuzzy topology $((X,I), \tau)$, then
	 \begin{enumerate}
	 	\item The fuzzy subspace $U$ of fuzzy topological space $(X,I)$ is fuzzy open iff $U = U^\circ	$
	 	\item $(A^\circ)^\circ = A^\circ$.
	 	\item $(A \cup B)^\circ	\supset A^\circ	 \cup B^\circ$ .
	 \end{enumerate}
\end{theorem}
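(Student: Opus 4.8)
All three items are the classical facts about topological interiors transcribed to fuzzy subspaces, so the plan is to first prove one structural lemma — that $U^{\circ}$ coincides with the union of the open fuzzy subspaces contained in $U$ — and then read off (1), (2), (3) essentially for free. The one thing that needs genuine care in the fuzzy setting is the passage between a fuzzy subspace and the fuzzy points it contains, so I would begin there.

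\textbf{Step 1: every fuzzy subspace is the union of its fuzzy points.} I would first record that for a fuzzy subspace $U=\{(x,u_x):x\in U_0\}$ one has $U=\bigcup\{P:\ P\text{ a fuzzy point},\ P\subset U\}$. Indeed, for $x\in U_0$ the set $u_x$ contains $0$ and at least one further element, so $u_x=\bigcup_{\rho\in u_x\setminus\{0\}}\{0,\rho\}$; taking the single‑support fuzzy points $P^{x,\rho}=\{(x,\{0,\rho\})\}$ for $x\in U_0$, $\rho\in u_x\setminus\{0\}$, their union (computed coordinatewise by Definition of $\cup$ for fuzzy subspaces) has support $U_0$ and value $u_x$ at each $x$, hence equals $U$. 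The empty case is the empty union. I would also note that the paper's relation $P\in O$ (i.e.\ $p_x\subset o_x$ for all $x\in P_0$) forces $P_0\subset O_0$, hence $P\subset O$ as fuzzy subspaces, since $\{0,\rho_x\}\subset o_x$ is impossible when $o_x=\{0\}$.

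\textbf{Step 2: $U^{\circ}=\bigcup\{O\in\tau:\ O\subset U\}$.} For "$\subset$": if $P$ is an interior point of $U$, there is $O\in\tau$ with $P\in O$ and $O\subset U$; by Step 1, $P\subset O\subset\bigcup\{O\in\tau:O\subset U\}$, and taking the union over all interior points gives $U^{\circ}\subset\bigcup\{O\in\tau:O\subset U\}$. For "$\supset$": if $O\in\tau$ and $O\subset U$, then $O$ is a neighborhood of each fuzzy point $P\subset O$ (it contains the element $O$ of $\tau$ containing $P$), so each such $P$ is an interior point of $U$; by Step 1, $O=\bigcup\{P\subset O\}\subset U^{\circ}$. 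This identity yields three consequences I will use: $U^{\circ}\subset U$ (each $O$ in the family lies in $U$); $U^{\circ}\in\tau$ by the union axiom of the fuzzy topology; and monotonicity, $V\subset U\Rightarrow V^{\circ}\subset U^{\circ}$, because the defining families are then nested.

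\textbf{Step 3: deducing the three statements.} For (1): if $U$ is open then $U\in\{O\in\tau:O\subset U\}$, so $U\subset U^{\circ}$, and with $U^{\circ}\subset U$ we get $U=U^{\circ}$; conversely if $U=U^{\circ}$ then $U\in\tau$ since $U^{\circ}\in\tau$. (Equivalently, and closer to the paper's own language, one can invoke the earlier theorem characterizing open fuzzy subspaces as exactly the neighborhoods of all their fuzzy points: $U$ open iff every fuzzy point of $U$ is interior iff $U^{\circ}\supset U$.) For (2): $A^{\circ}\in\tau$ by Step 2, so applying (1) to $A^{\circ}$ gives $(A^{\circ})^{\circ}=A^{\circ}$. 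For (3): from $A\subset A\cup B$ and $B\subset A\cup B$ and the monotonicity in Step 2, $A^{\circ}\subset(A\cup B)^{\circ}$ and $B^{\circ}\subset(A\cup B)^{\circ}$, hence $A^{\circ}\cup B^{\circ}\subset(A\cup B)^{\circ}$.

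\textbf{Expected difficulty.} The only delicate point is Step 1 together with the remark that $P\in O$ implies $P\subset O$: once the dictionary between fuzzy subspaces and their fuzzy points, and between "neighborhood/interior point" and "contains an open subspace", is made precise at the level of the coordinate sets $u_x$, Steps 2 and 3 are a verbatim copy of the ordinary topological arguments and involve no further obstacle.
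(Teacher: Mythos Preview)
Your argument is correct. The structural lemma in Step~2, that $U^{\circ}=\bigcup\{O\in\tau:O\subset U\}$, together with the preparatory Step~1 reconciling the paper's ``$P\in O$'' relation with the subspace inclusion $P\subset O$, is the right anchor, and (1)--(3) do follow immediately from it as you say.

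The paper takes a more ad hoc route. It declares (1) and (2) omitted and proves only (3), working directly with the membership data: from $\mu_{A^{\circ}}\subset\mu_A\subset\max\{\mu_A,\mu_B\}$ and the analogous inclusion for $B$ it gets $\max\{\mu_{A^{\circ}},\mu_{B^{\circ}}\}\subset\max\{\mu_A,\mu_B\}$, then observes that $A^{\circ}\cup B^{\circ}$ is a union of fuzzy neighborhoods, hence itself a neighborhood contained in $A\cup B$, hence contained in $(A\cup B)^{\circ}$. This is morally the same monotonicity-plus-``interior is open'' reasoning you use, but the paper never isolates your Step~2 lemma and instead implicitly invokes the principle ``an open (or neighborhood) subspace contained in $W$ lies in $W^{\circ}$'' without justification. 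Your approach buys a clean, reusable characterization of $U^{\circ}$ from which all three items (and the monotonicity you need) are one-liners; the paper's approach is shorter on the page for (3) alone but leaves (1), (2), and the underlying principle unargued.
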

\begin{proof}
	Prove (1) and (2) are omitted. The proof of (3) is illustrated below
	
	3. We have $\mu_{A^\circ} \subset  \mu_A \subset  \;\; max\{\mu_A,\mu_B\}, and \mu_B^\circ \subset \mu_B \subset \;\; max\{\mu_A, \mu_B\},$ $\max \{\mu_A^\circ, \mu_B^\circ\} \subset  \; \; max\{\mu_A, \mu_B\}$. Now $max\{\mu_A^\circ, \mu_B^\circ\}$ is a union of fuzzy neighborhood, hence is fuzzy neighborhood, so we get $max\{\mu_{A^\circ}, \mu_{B^\circ}\} \subset(max\{\mu_A, \mu_B\})^\circ$.
\end{proof}
\begin{theorem}
	If $U$ is fuzzy subspace of the fuzzy topology $((X,I), \tau)$, then
	\begin{enumerate}
		\item $Ext(A) = (X,I)\backslash	Cl(A)$.
		\item $Ext(\phi) = Int(X)$ and $Ext(\phi) = Int(X)$.
		\item Ext($U$) is the largest fuzzy neighborhood of $U^\circ$.
		\item Ext$(A \cup B) = Ext(A) \cap Ext(B)$.
	\end{enumerate}
\end{theorem}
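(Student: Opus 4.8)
The plan is to reduce every part to the single identity in part~(1)---that the exterior of a fuzzy subspace is the interior of its complement---and then to manipulate closures and complements using the fact recorded in the Note preceding the Main Results, namely that the operations $\subset,\cup,\cap,-$ on fuzzy subspaces are carried out coordinatewise on the sets of membership values, so that the corresponding identities from ordinary point-set topology transfer. For part~(1) I would unwind the definition of an exterior fuzzy point: $P\in Ext(A)$ means there is an open fuzzy subspace $G\in\tau$ with $P\in G\subseteq (X,I)-A$. Taking the union over all such $P$ gives $Ext(A)=\bigcup\{G\in\tau: G\subseteq (X,I)-A\}=\big((X,I)-A\big)^{\circ}$, by the Definition of $U^{\circ}$ as the union of interior points. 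On the other hand $Cl(A)=\bigcap\{F:\ F\ \text{closed},\ A\subseteq F\}$; complementing, and using that $F$ is closed iff $(X,I)-F\in\tau$ together with the coordinatewise De Morgan law, gives $(X,I)-Cl(A)=\bigcup\{(X,I)-F:\ F\ \text{closed},\ A\subseteq F\}=\bigcup\{G\in\tau:\ G\subseteq (X,I)-A\}$. Comparing the two expressions yields $Ext(A)=(X,I)-Cl(A)$.

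For part~(2) I would simply apply~(1) with $A=\phi$: since $\phi$ is closed, $Cl(\phi)=\phi$, so $Ext(\phi)=(X,I)-\phi=(X,I)$, and since $(X,I)\in\tau$ we have $Int(X)=(X,I)^{\circ}=(X,I)$, whence $Ext(\phi)=Int(X)$ (the second clause of~(2) being the same statement, or dually $Ext((X,I))=\phi$). For part~(3), the proof of~(1) already shows $Ext(U)=\big((X,I)-U\big)^{\circ}$ is a union of members of $\tau$, hence itself open, and is by construction the union of \emph{all} open fuzzy subspaces contained in $(X,I)-U$; therefore any open fuzzy subspace lying in $(X,I)-U$ is contained in $Ext(U)$, which is the content of calling it the largest such fuzzy neighborhood associated to $U^{\circ}$.

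For part~(4) I would first establish $Cl(A\cup B)=Cl(A)\cup Cl(B)$, again transporting the ordinary identity along the coordinatewise description of the operations: the inclusion $\supseteq$ is monotonicity of $Cl$, and $\subseteq$ holds because $Cl(A)\cup Cl(B)$ is closed and contains $A\cup B$. Then by~(1) and the De Morgan law for the fuzzy difference, $Ext(A\cup B)=(X,I)-Cl(A\cup B)=(X,I)-\big(Cl(A)\cup Cl(B)\big)=\big((X,I)-Cl(A)\big)\cap\big((X,I)-Cl(B)\big)=Ext(A)\cap Ext(B)$. Alternatively one can argue directly on fuzzy points: $P\in Ext(A\cup B)$ iff some $G\in\tau$ satisfies $P\in G\subseteq (X,I)-(A\cup B)=((X,I)-A)\cap((X,I)-B)$, which happens iff that same $G$ simultaneously witnesses $P\in Ext(A)$ and $P\in Ext(B)$.

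The only genuinely delicate point---and hence the main obstacle---is that the difference and complement of fuzzy subspaces are not perfectly well behaved: by the Definition of $U-V$ one has only $S(U-V)\supseteq U_{0}-V_{0}$ in general, and $(X,I)-\big((X,I)-A\big)$ need not coincide with $A$ on the nose because of the zero element forced into every membership set. So the De Morgan and double-complement steps used above should be verified at the level of membership sets rather than quoted blindly; once one checks that these extra zeros never change which fuzzy \emph{points} belong to the subspaces in question, the classical arguments go through unchanged.
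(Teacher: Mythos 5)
Your argument for part (4) is essentially the paper's own: the paper likewise rewrites $Ext(A\cup B)$ as the complement of $Cl(A\cup B)$, uses $Cl(A\cup B)=Cl(A)\cup Cl(B)$ (written there as $Cl(\max\{\mu_A,\mu_B\})$), and applies De Morgan to land on the intersection (minimum) of the two exteriors, while parts (1)--(3), which you prove in detail via $Ext(A)=((X,I)-A)^{\circ}=(X,I)-Cl(A)$, are simply declared trivial in the paper. Your added caution about the fuzzy difference (the inclusion $S(U-V)\supseteq U_0-V_0$ and the forced zero element) is a reasonable refinement the paper does not address, but it does not change the route.
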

\begin{proof}
	 Part (1), (2), and (3) trivial. Part (4) is proved as bellow:
	 
	 4. By definition Ext $(A \cup B)$ is equal to
$X \backslash Cl(max\{\mu_A, \mu_B\}) = X \backslash max\{\mu_{X \backslash Cl(A)}, \mu_{X \backslash Cl(B)}\} = min\{\mu_{X \backslash Cl(A)},  \mu_{X \backslash Cl(A)}\}$.
\end{proof}
\begin{theorem}
	If $U$ is fuzzy subspace of the fuzzy topology $((X,I), \tau)$, then
	\begin{enumerate}
		\item 	$Bd(C) = Cl(C) \backslash C^\circ$.
		\item The boundary of a fuzzy space is the boundary of the complement of the fuzzy space $Bd(U) = Bd(U^\circ)$.
		\item $Bd(A \cup  B) \subseteq Bd(A) \cup  Bd(B)$.
	\end{enumerate}
\end{theorem}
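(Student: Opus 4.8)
The plan is to derive all three parts from the single characterization in part (1), importing the corresponding ordinary point-set facts through the membership-value dictionary recorded in the Note. First I would prove (1). By the definition of a boundary fuzzy point, $P \in Bd(C)$ means that every neighborhood $G$ of $P$ meets both $C$ and $(X,I)\setminus C$. The condition ``every neighborhood of $P$ meets $C$'' is exactly the statement that $P$ lies in every closed fuzzy subspace containing $C$, i.e.\ $P \in Cl(C)$; I would first record this neighborhood characterization of $Cl$ as a small lemma, argued as in the ordinary case via the Note. The condition ``every neighborhood of $P$ meets $(X,I)\setminus C$'' says precisely that no neighborhood of $P$ is contained in $C$, i.e.\ $P$ is not an interior point of $C$, so $P \notin C^\circ$. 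Combining, $Bd(C) = Cl(C) \cap ((X,I)\setminus C^\circ) = Cl(C)\setminus C^\circ$, where the last equality uses $C^\circ \subseteq C \subseteq Cl(C)$.

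For part (2), the defining condition of $Bd(U)$ is visibly symmetric under interchanging $U$ with its complement $U^c = (X,I)-U$, since a neighborhood meets $U$ and meets $U^c$ iff it meets $U^c$ and meets $(U^c)^c = U$; hence $Bd(U) = Bd(U^c)$. (I read the $Bd(U^\circ)$ in the statement as a misprint for $Bd(U^c)$, matching the sentence that precedes it.) Equivalently, and this is the version I would write out, apply (1) together with the duality relations $Cl(U^c) = (X,I)\setminus U^\circ$ and $(U^c)^\circ = (X,I)\setminus Cl(U)$, which again follow from the ordinary ones through the Note; then $Bd(U^c) = Cl(U^c)\setminus (U^c)^\circ = ((X,I)\setminus U^\circ)\setminus((X,I)\setminus Cl(U)) = Cl(U)\setminus U^\circ = Bd(U)$.

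For part (3), using (1) I would write $Bd(A\cup B) = Cl(A\cup B)\setminus (A\cup B)^\circ$. Finite unions commute with closure, $Cl(A\cup B) = Cl(A)\cup Cl(B)$ (a fact I would note in passing: $\mu_{A\cup B} = \max\{\mu_A,\mu_B\}$, and closure is monotone and preserves finite joins), while part (3) of the preceding theorem gives $A^\circ\cup B^\circ \subseteq (A\cup B)^\circ$, hence $(X,I)\setminus (A\cup B)^\circ \subseteq ((X,I)\setminus A^\circ)\cap((X,I)\setminus B^\circ)$. Substituting and distributing this intersection over the union $Cl(A)\cup Cl(B)$, each resulting term satisfies $Cl(A)\cap((X,I)\setminus A^\circ)\cap((X,I)\setminus B^\circ) \subseteq Cl(A)\setminus A^\circ = Bd(A)$ and symmetrically for $B$, so $Bd(A\cup B)\subseteq Bd(A)\cup Bd(B)$.

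The set-theoretic manipulations above are routine; the one place that needs genuine care — and which I expect to be the main obstacle — is justifying the three auxiliary facts about fuzzy subspaces in Dib's setting (the neighborhood characterization of $Cl$, the interior/closure duality under complementation, and $Cl(A\cup B)=Cl(A)\cup Cl(B)$), because the fuzzy difference of Definition~3.1 and the $\{0\}$-convention for the membership sets make ``complement'' behave slightly differently from the ordinary case on supports. I would handle this by working throughout with the membership-value sets $u_x$, as the Note suggests, so that each of these statements reduces to its classical counterpart applied coordinatewise in $x$.
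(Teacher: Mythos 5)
Your proposal is correct and takes essentially the same route as the paper: the paper declares (1) and (2) trivial and proves (3) by precisely the reduction you use — rewriting $Bd$ via part (1) in membership-value ($\max$) form for $A$, $B$, and $A\cup B$ and asserting the resulting inclusion — which your argument merely fills in explicitly through $Cl(A\cup B)=Cl(A)\cup Cl(B)$, the inclusion $A^\circ\cup B^\circ\subseteq (A\cup B)^\circ$ from the preceding theorem, and routine set algebra. Your reading of $Bd(U^\circ)$ in part (2) as a misprint for $Bd(U^c)$ agrees with the paper's own wording of that part.
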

\begin{proof}
	Part (1) and (2) trivial. We proved part (3) as bellow:
	
	3. $max\{\mu_{Bd(A)}, \mu_{Bd(B)}\} = max\{\mu_{Cl(A \backslash A^\circ)}, \mu_{Cl(A \backslash A^\circ)}\}	 \supseteq Cl(\{\mu_A, \mu_B\} ) \backslash(\{\mu_A, \mu _B\} )^\circ= Bd \; max\{\mu_A, \mu_B\}$.
\end{proof}
\section{Separation Axioms}
In this section, separation axioms on fuzzy spaces are presented and studied by using some illustration 
examples. Later some theorems are proved to show the properties of fuzzy spaces in these notions.
\begin{definition}
	 Let $((X,I), \tau)$ be a fuzzy topological space. Then
	 \begin{enumerate}
	 	\item [(a)] (fuzzy $T_0-$Space) A fuzzy space $(X,I)$ is a fuzzy $T_0$-space if for each pair distinct fuzzy points $p, q \in (X,I)$ there is a neighborhood $U \subset (X,I)$ such that $U$ contains one of $p$ or $q$ but not the other.
	 	\item [(b)] (fuzzy $T_1$-Space) A fuzzy space $(X,I)$ is a fuzzy $T_1$-space if for each pair of distinct fuzzy points $p, q \in	(X,I)$ there are two neighborhoods $G, H \subset (X,I)$ such that $p \in G$ but $q \not \in  G$ or $q \in H$ but $p \not \in H$.
	 	\item [(c)] (fuzzy $T_2$-Space) A fuzzy space $(X,I)$ is a fuzzy $T_2$-space if for each pair of distinct fuzzy points $p, q \in X$ there are two disjoint neighborhoods $G, H \subset X$ such that $p \in G, q \in H$.
	 	\item  [(d)] (fuzzy Regular Space) A fuzzy space $(X,I)$ is fuzzy regular if for each fuzzy point $p \in (X,I)$ and each closed fuzzy  subspace $F \subset (X,I)$ such that $p \not \in  F$ there are two disjoint neighborhoods $G, H \subset (X,I)$ such that $p \in G, F \subset H$.
	 	\begin{Note}
	 		A fuzzy regular fuzzy $T_1$-space is called a fuzzy $T_3$-space.
	 	\end{Note}
	 	\item [(e)] (fuzzy Normal Space) A fuzzy space $(X,I)$ is fuzzy normal if for each pair $F_1
	 	, F_2$ of disjoint closed fuzzy 	subspaces of $(X,I)$, there are two disjoint neighborhoods $G, H$, so that $F_1 \subset G, F_2 \subset H.$	
	 	 \begin{Note}
A fuzzy normal fuzzy $T_1$-space is called a fuzzy $T_4$-space.
	 	 \end{Note} 
 		 \end{enumerate}
 	 \begin{example}
 	 	Fuzzy $T_0$-space is not fuzzy $T_1$-space.
 	 	
 	 	Let $X= \{a,b\}$ and $p=(a,\{0,\frac{1}{2}\}) , q=(b,\{0,\frac{1}{3}\}), s=(\{a,b\},\{0,\frac{1}{6}\})$ are the fuzzy points of the fuzzy space 
 	 	$(X,I)$ with the fuzzy topology
 	 	$\tau = \{\emptyset,(X,I),u_1=\{(a,\{0,\frac{1}{2}\})\},$ $u_2  =\{(\{a,b\},\{0,\frac{1}{6}\})\},u_3=\{(\{0,\frac{1}{2}\}),(\{a,b\},\{0,\frac{1}{6}\})\}\}.$

 	 	Notice that $p \not = q$ with $p \in u_1$ and $q \not \in u_1$. For $p \not \in s$, we have $p \in u_1$ and $s \not \in  u_1$. Also, for $q \not = s$, we see that $s \in u_2$ and $q \not \in u_2$. So, $(X,I)$ is a fuzzy $T_0$- space. To show $(X,I)$ is not fuzzy $T_1$-space. For $p \not =q$, there is no neighborhood contains $q$, so it is not fuzzy $T_1$-space.

 	 \end{example}
\end{definition}
\section{Conclusion} \label{FP-Arch conculusion}
\hspace{\parindent}
The new approach given by Dib in 1994 is reviewed in this paper. We fill the lack of Dib's article by introducing 
isolated point, exterior, and other notions in fuzzy space. Separation axioms are illustrated, and some of its 
properties are proved under Dip's approach. Some examples are illustrated graphically to be deal with fuzzy space.
\bibliographystyle{unsrt}  
\bibliographystyle{unsrt}  


\end{document}